\newtheorem{question}{Question}[section]
\newtheorem{definition}[question]{Definition}
\newtheorem{theorem}[question]{Theorem}
\newtheorem{corollary}[question]{Corollary}
\newtheorem{lemma}[question]{Lemma}
\newtheorem{example}[question]{Example}
\def\N{\mathcal{N}}
\def\G{\mathcal{G}}
\def\B{\mathcal{B}}
\def\L{\mathcal{L}}
\def\U{\mathcal{U}}
\def\V{\mathcal{V}}
\title{Covering by discrete and closed discrete sets}
\author{Santi Spadaro}
\address{
Department of Mathematics and Statistics\\
Auburn University\\
221 Parker Hall\\
Auburn, Alabama -- USA 36849-5310}
\email{spadasa@auburn.edu}
\thanks{Research partially supported by National Science Foundation grant DMS-0405216 (Principal Investigator - Dr. Gary Gruenhage)}
\subjclass[2000]{Primary: 54A25; Secondary: 54E52, 54E18, 54E30, 54F05}
\keywords{discrete set, dispersion character, Moore space, $\sigma$-space, Baire, LOTS}
\begin{document}
\baselineskip.525cm

\begin{abstract}
Say that a cardinal number $\kappa$ is \emph{small} relative to the space $X$ if $\kappa <\Delta(X)$, where $\Delta(X)$ is the least cardinality of a non-empty open set in $X$. We prove that no Baire metric space can be covered by a small number of discrete sets, and give some generalizations. We show a ZFC example of a regular Baire $\sigma$-space and a consistent example of a normal Baire Moore space which can be covered by a small number of discrete sets. We finish with some remarks on linearly ordered spaces.
\end{abstract}

\maketitle

\section{Introduction}

We will assume all spaces to be Hausdorff. \emph{Crowded} is Eric Van Douwen's apt name for a space without isolated points. All undefined notions can be found in \cite{En}, \cite{G2} and \cite{J}. Let $dis(X)$ be the least number of discrete sets required to cover the space $X$. The cardinal function $dis(X)$ is introduced by Juh\'asz and Van Mill in \cite{JM}, where the authors provide some lower bounds for $dis(X)$ and ask whether $dis(X) \geq \mathfrak{c}$, for any crowded compact space $X$. Gruenhage \cite{G1} shows that this is the case, by proving that $dis(X)$ cannot be raised by perfect mappings. In \cite{JS} Juh\'asz and Szentmikl\'ossy prove that if $X$ is a compact space such that $\chi(x,X) \geq \kappa$ for every $x \in X$, then $dis(X) \geq 2^\kappa$, thus generalizing both Gruenhage's result and the classical \v Cech-Pospi\v sil theorem (in which the cardinality of $X$ takes the place of $dis(X)$). Let $\Delta(X)$ be the \emph{dispersion character of $X$}, that is, the least cardinality of a non-empty open set in $X$. Since in a compact space where every point has character at least $\kappa$ we have $\Delta(X) \geq 2^\kappa$, Juh\'asz and Szentmikl\'ossy ask the following natural question.

\begin{question} \cite{JS}
Is $dis(X) \geq \Delta(X)$ for any compact space $X$?
\end{question}

Our work on the above question led us to investigate for what kind of Baire spaces, other than the compact ones, Juh\'asz and Szentmikl\'ossy's inequality could be true. In this note we prove that $dis(X) \geq \Delta(X)$ for two classes of Baire generalized metric spaces which satisfy a mild separation-type property. Moreover, we construct examples of very good Baire spaces for which $dis(X) < \Delta(X)$.

In the last section we prove that $dis(X)=|X|$ for every locally compact Lindel\"of linearly ordered space (LOTS) and show an example of an hereditarily paracompact Baire LOTS for which the gap between $dis(X)$ and $\Delta(X)$ can be arbitrarily large.

\section{Generalized metric spaces}

Given a collection $\G$ of subsets of $X$, set $st(x, \G)=\bigcup \{
G \in \G: x \in G \}$ and $ord(x,\G)=|\{G \in \G: x \in G\}|$.
Recall that a sequence $\{\G_n : n \in \omega \}$ of open covers of
$X$ is said to be a \emph{development} if $\{st(x, \G_n): n \in
\omega \}$ is a local base at $x$ for every $x \in X$. A space is
called \emph{developable} if it admits a development. A regular
developable space is called a \emph{Moore space}.

\begin{definition}
Let $\kappa$ be a cardinal. We call a space \emph{$\kappa$-expandable} if every closed discrete set expands to a collection of open sets $\G$ such that $ord(x,\G) \leq \kappa$ for every $x \in X$.
\end{definition}

The following theorem is new even for all complete metric spaces.

\begin{theorem} \label{th1}
Let $X$ be a Baire $\omega_1$-expandable developable space. Then $dis(X) \geq \Delta(X)$.
\end{theorem}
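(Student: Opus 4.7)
The plan is to argue by contradiction: suppose $X=\bigcup_{\alpha<\kappa}D_\alpha$ with each $D_\alpha$ discrete and $\kappa<\Delta(X)$. Since $\Delta(X)>1$, $X$ has no isolated points, so $X$ is crowded, and every closed discrete subset of $X$ is nowhere dense.

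First I would reduce to a union of closed discrete sets using the development. Fix a refining development $\{\G_n\}$ and set
\[
D_\alpha^n=\{x\in D_\alpha:st(x,\G_n)\cap D_\alpha=\{x\}\},
\]
so that $D_\alpha=\bigcup_n D_\alpha^n$. I claim each $D_\alpha^n$ is closed: if $y\in\overline{D_\alpha^n}$, choose $z_m\in st(y,\G_m)\cap D_\alpha^n$ for every $m\geq n$. Since $y$ and $z_m$ share a $\G_m$-member we have $y\in st(z_m,\G_m)\subseteq st(z_m,\G_n)$, and because $st(z_m,\G_n)\cap D_\alpha=\{z_m\}$, the case $y\in D_\alpha$ forces $y=z_m\in D_\alpha^n$. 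If $y\notin D_\alpha$, then $st(z_m,\G_n)$ is an open neighborhood of $y$, so by the local base property of $\{st(y,\G_k)\}_k$ we get $z_{m'}\in st(y,\G_{m'})\subseteq st(z_m,\G_n)$, and hence $z_{m'}=z_m$ for all sufficiently large $m'$; thus $(z_m)$ stabilizes to a single point $z^*\in\bigcap_k st(y,\G_k)=\{y\}$ by Hausdorffness, contradicting $z^*\in D_\alpha$ while $y\notin D_\alpha$. Consequently $X$ is the union of at most $\kappa$ closed discrete, hence nowhere dense, sets.

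If $\kappa\leq\omega$ this already contradicts the Baire property of $X$. For $\kappa\geq\omega_1$ the $\omega_1$-expandability hypothesis enters: for each pair $(\alpha,n)$ fix an open expansion $\{U_e^{\alpha,n}:e\in D_\alpha^n\}$ of order at most $\omega_1$, with $U_e^{\alpha,n}\cap D_\alpha^n=\{e\}$ (arranged by shrinking via $T_2$). The aim is to use these expansions together with the countable stratification coming from the development to show that some nonempty open $V\subseteq X$ satisfies $|V|\leq\kappa$, contradicting $|V|\geq\Delta(X)>\kappa$.

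The hard part is the uncountable case. Given $x\in X$ and a star $V=st(x,\G_m)$, the point-$\omega_1$ expansion should combine with the refining property of the development to yield a uniform bound $|V\cap D_\alpha^n|\leq\omega_1$ for every level $n$ and every index $\alpha$; summing over the countably many levels and the $\kappa$ indices then yields $|V|\leq\omega\cdot\kappa\cdot\omega_1=\kappa$, as needed. The principal obstacle is establishing this $\omega_1$-bound on $|V\cap D_\alpha^n|$ for every $n$, including the case $n>m$ where the refinement $\G_n\succ\G_m$ does not immediately control things; this is precisely where the interplay between developability, $\omega_1$-expandability, and the Baire property must be carefully exploited.
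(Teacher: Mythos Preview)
Your decomposition into the closed discrete sets $D_\alpha^n$ matches the paper's setup exactly. The divergence is in how the contradiction is reached, and your proposed route has a genuine gap. You want a nonempty open $V$ with $|V\cap D_\alpha^n|\le\omega_1$ for \emph{every} pair $(\alpha,n)$, so that summing gives $|V|\le\kappa$. But $\omega_1$-expandability cannot deliver that bound: an expansion $\{U_e\}$ of a closed discrete set controls how many $U_e$ pass through a fixed \emph{point}, not how many points of the discrete set lie in a fixed \emph{open set}. A closed discrete subset of $V$ can be arbitrarily large while admitting a pairwise disjoint (hence point-$1$) open expansion. So the obstacle you flag for $n>m$ is not a technicality; it reflects a real failure of the strategy. (Even the case $n\le m$ only gives $|V\cap D_\alpha^n|\le ord(x,\G_n)$, which is not bounded by $\omega_1$ without further hypotheses on the development.)

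The paper does not try to bound $|V|$ at all. It sets $X_n=\bigcup_\alpha D_\alpha^n$ and applies Baire to the countable cover $X=\bigcup_n X_n$ to obtain $k$ and a nonempty open $U\subset\overline{X_k}$. Shrinking $U$ inside a single member of $\G_k$ gives $|U\cap X_k|\le\kappa$, because each $G\in\G_k$ meets each $D_\alpha^k$ in at most one point. Since $|U|\ge\Delta(X)>\kappa$, some single $D_\alpha^j$ must contribute a closed discrete set $S\subset U\cap(\overline{X_k}\setminus X_k)$ with $|S|>\kappa$. Only now is $\omega_1$-expandability invoked, and on $S$: expand to $\{U_x:x\in S\}$, put $V_x=U_x\cap st(x,\G_j)\cap U$, and choose $f(x)\in V_x\cap X_k$ (possible since $x\in U\subset\overline{X_k}$). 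The resulting $f:S\to U\cap X_k$ has domain of size $>\kappa$, range of size $\le\kappa$, and fibers of size $\le\omega_1\le\kappa$, a contradiction. The missing idea in your plan is this fiber-counting use of the expansion, mapping the \emph{large} closed discrete $S$ into the \emph{small} dense piece $U\cap X_k$, rather than attempting to bound the discrete pieces directly.
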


\begin{proof}
Fix a development $\{\G_n: n \in \omega \}$ for $X$ and suppose by contradiction that $\tau=dis(X) < \Delta(X)$. Since the inequality $dis(X) \geq \omega_1$ is true for every crowded Baire space $X$ we can assume that $\tau \geq \omega_1$. Set $X=\bigcup_{\alpha < \tau} D_\alpha$, where each $D_\alpha$ is discrete. Define $D_{\alpha,n}=\{x \in D_\alpha : st(x, \G_n) \cap D_\alpha =\{x\}\}$ and set $X_n= \bigcup_{\alpha \in \tau} D_{\alpha,n}$.

\vspace{.1in}

\noindent \textbf{Claim:} For every $x \in X_k$ there is a neighbourhood $G$ of $x$ such that $|G \cap X_k| \leq \tau$.

\begin{proof}[Proof of Claim.]
Let $G \in \G_k$ be such that $x \in G$. Then $G$ hits each $D_{\alpha, k}$ in at most one point: indeed, if $y,z \in G \cap D_{\alpha, k}$ with $y \neq z$, we'd have both $st(y, \G_k) \cap D_{\alpha,k}=\{y\}$ and $z \in st(y,\G_k) \cap D_{\alpha,k}$, which is a contradiction.
\renewcommand{\qedsymbol}{$\triangle$}
\end{proof}

Now $X=\bigcup_{n \in \omega} X_n$, so, by the Baire property of $X$, there is $k \in \omega$ such that $U \subset \overline{X_k}$ for some non-empty open set $U$. By the claim we can assume that $|U \cap X_k| \leq \tau$. So $|U \cap (\overline{X_k} \setminus X_k) \cap D_{\alpha,j}| > \tau$ for some $\alpha < \tau$ and $j \in \omega$.

Notice that the set $D_{\alpha,j}$ is actually closed discrete: indeed suppose $y \notin D_{\alpha,j}$ were some limit point. Let $V \in \G_j$ be a neighbourhood of $y$ and pick two points $z, w \in V \cap D_{\alpha,j}$. By definition of $D_{\alpha,j}$ we have $st(z, \G_j) \cap D_{\alpha,j}=\{z\}$. But $w \in V \subset st(z,\G_j)$, which leads to a contradiction.

Observe now that also $S:=U \cap (\overline{X_k} \setminus X_k) \cap D_{\alpha,j}$ is closed discrete and hence we can expand it to a collection $\U=\{U_x : x \in S \}$ of open sets such that $ord(y, \U) \leq \omega_1$ for every $y \in X$. Set $V_x=U_x \cap st(x,\G_j) \cap U$ and observe that $V_x \neq V_y$ whenever $x \neq y$ and if we put $\V=\{V_x: x \in S\}$ then we also have that $ord(y,\V) \leq \omega_1$ for every $y \in X$. For every $x \in S$ pick $f(x) \in V_x \cap X_k$: the mapping $f$ has domain of cardinality $>\tau$, range of cardinality $\leq \tau$ and fibers of cardinality $\leq \omega_1$, which is a contradiction.
\end{proof}

\begin{corollary}
$dis(X) \geq \Delta(X)$, for every Baire collectionwise Hausdorff (or meta-Lindel\"of) developable space $X$.
\end{corollary}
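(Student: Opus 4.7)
The plan is to deduce the corollary directly from Theorem \ref{th1} by verifying that both hypotheses force $X$ to be $\omega_1$-expandable. Given that, there is nothing left to do, since $X$ is already assumed Baire and developable.

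First I would dispose of the collectionwise Hausdorff case. Let $D$ be any closed discrete subset of $X$. By definition of collectionwise Hausdorff there is a pairwise disjoint family $\{U_x : x \in D\}$ of open sets with $x \in U_x$. Such a family trivially satisfies $ord(y, \{U_x : x \in D\}) \leq 1$ for every $y \in X$, so $X$ is $1$-expandable, and a fortiori $\omega_1$-expandable.

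Next I would treat the meta-Lindel\"of case. Again let $D$ be closed discrete. Since $D$ is discrete and closed, for each $x \in D$ one can pick an open neighbourhood $W_x$ of $x$ with $W_x \cap D = \{x\}$. The collection $\{X \setminus D\} \cup \{W_x : x \in D\}$ is an open cover of $X$, so by meta-Lindel\"ofness it has a point-countable open refinement $\mathcal{R}$. For every $x \in D$ choose some $V_x \in \mathcal{R}$ with $x \in V_x$; since $V_x$ refines the original cover and $x \in D$, we must have $V_x \subseteq W_y$ for some $y \in D$, and then $x \in V_x \cap D \subseteq W_y \cap D = \{y\}$ forces $y = x$. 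Hence $\{V_x : x \in D\}$ is an expansion of $D$ whose order at every point of $X$ is at most $\omega$, in particular at most $\omega_1$, witnessing $\omega_1$-expandability.

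Once $\omega_1$-expandability is established in both cases, Theorem \ref{th1} immediately yields $dis(X) \geq \Delta(X)$. I do not anticipate any real obstacle: the only point requiring a brief sanity check is that in the meta-Lindel\"of case the chosen $V_x$ is not forced to meet $D$ solely at $x$ by the definition of ``expansion'' alone, but this follows automatically from the fact that $V_x$ refines some $W_y$.
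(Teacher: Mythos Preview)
Your proof is correct and is exactly the intended argument: the paper states this corollary without proof, relying on the (straightforward) fact that both collectionwise Hausdorff and meta-Lindel\"of spaces are $\omega_1$-expandable, whence Theorem~\ref{th1} applies. Your verification of $\omega_1$-expandability in each case is clean and complete.
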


\begin{corollary} \label{metr}
$dis(X) \geq \Delta(X)$, for every Baire metric space $X$.
\end{corollary}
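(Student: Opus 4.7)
The plan is to derive Corollary \ref{metr} as an immediate consequence of Theorem \ref{th1}, by showing that every metric space is both developable and $\omega_1$-expandable. Since the first hypothesis requires no work and the second follows from a classical property of metric spaces, this should be a very short proof.

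First I would recall that every metric space $X$ is developable: if $d$ is a compatible metric, then letting $\G_n$ be the open cover of $X$ by all open balls of radius $1/n$ gives a development, since $st(x,\G_n)$ is contained in the ball of radius $2/n$ around $x$, and these form a local base at $x$. Hence the developability hypothesis of Theorem \ref{th1} is automatic.

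Next I would verify $\omega_1$-expandability. In fact, metric spaces are collectionwise normal, so given any closed discrete subset $D \subset X$, one can expand $D$ to a pairwise disjoint collection $\G = \{U_d : d \in D\}$ of open sets with $d \in U_d$. Since the members of $\G$ are pairwise disjoint, $ord(x,\G) \leq 1$ for every $x \in X$, which is well below $\omega_1$. Thus $X$ is $\omega_1$-expandable (and this also shows metric spaces fall under the first corollary via collectionwise Hausdorffness, giving an alternative path).

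With both hypotheses verified, Theorem \ref{th1} applies directly to any Baire metric space $X$ and yields $dis(X) \geq \Delta(X)$. There is no real obstacle here; the substantive work has already been absorbed into Theorem \ref{th1}, and the only thing to check is that metric spaces satisfy its hypotheses, which they do for well-known reasons.
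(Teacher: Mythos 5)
Your proposal is correct and follows exactly the route the paper intends: the corollary is stated without proof as an instance of Theorem \ref{th1} (via the preceding corollary on collectionwise Hausdorff developable spaces), using precisely the facts that metric spaces are developable and collectionwise normal, so closed discrete sets expand to disjoint open families. Nothing to add.
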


Recall that a \emph{network} is a collection $\N$ of subsets of a topological space such that for every open set $U \subset X$ and every $x \in U$ there is $N \in \N$ with $x \in N \subset U$. A \emph{$\sigma$-space} is a space having a $\sigma$-discrete network.

Our next aim is proving that $dis(X) \geq \Delta(X)$ for every regular Baire $\omega_1$-expandable $\sigma$-space. We could give a more direct proof, but we feel that the real explanation for that is the following probably folklore fact, a proof of which can be found in \cite{vD}.

\begin{lemma} \label{douwen}
Every regular Baire $\sigma$-space has a dense metrizable $G_\delta$ subspace.
\end{lemma}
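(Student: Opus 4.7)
The plan is to identify a dense $G_\delta$ subspace $Y$ on which the given $\sigma$-discrete network actually becomes a (restriction of a) $\sigma$-discrete base, and then invoke the Bing--Nagata--Smirnov metrization theorem.

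First, I would reduce to the case of a closed network. Let $\N=\bigcup_{n\in\omega}\N_n$ be a $\sigma$-discrete network. Since $X$ is regular, for any open $U\ni x$ we can find open $V$ with $x\in V\subset \overline V\subset U$ and then $N\in\N$ with $x\in N\subset V$, so $\overline N\subset U$; moreover closures of a discrete family form a discrete family (any open neighbourhood disjoint from $N$ is also disjoint from $\overline N$). Hence we may and do assume each $N\in\N$ is closed.

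Second, for each $n$ put $F_n=\bigcup\N_n$. Because $\N_n$ is a discrete family of closed sets, $F_n$ is closed. Let $B_n=F_n\setminus\operatorname{int}(F_n)$, a closed nowhere dense set, and set $V_n=X\setminus B_n$, open and dense. By the Baire property, $Y=\bigcap_{n\in\omega}V_n$ is a dense $G_\delta$ subset of $X$. Since $X$ is regular, so is $Y$.

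Third, I would verify that $\B=\{\operatorname{int}(N)\cap Y: N\in\N\}$ is a $\sigma$-discrete base for $Y$. The crucial point, which is really the heart of the argument, is this: if $y\in Y$ and $y\in N\in\N_n$, then $y\in F_n$ and $y\in V_n$ force $y\in\operatorname{int}(F_n)$; pick an open $W\ni y$ with $W\subset F_n$, and shrink $W$ using the discreteness of $\N_n$ so that $W$ meets only $N$ among members of $\N_n$. Since $W\subset F_n$ and $W$ hits only $N$, we get $W\subset N$, whence $y\in\operatorname{int}(N)$. Given any open $O\ni y$ in $X$, choose $N\in\N$ with $y\in N\subset O$ (possible since $\N$ is a closed network); then $y\in\operatorname{int}(N)\cap Y\subset O\cap Y$, proving $\B$ is a base at $y$ in $Y$. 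The family $\{\operatorname{int}(N):N\in\N_n\}$ is discrete in $X$ because it refines the discrete family $\N_n$, so $\B$ is $\sigma$-discrete in $Y$. Applying Bing--Nagata--Smirnov to the regular space $Y$ with the $\sigma$-discrete base $\B$ yields that $Y$ is metrizable.

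The main obstacle is step three, where one has to upgrade ``network'' to ``base'' — network elements generally have empty interior, so one must excise a meagre set (the union of the boundaries $B_n$) to force interiors to become available at every point of the remaining dense $G_\delta$. The rest is routine.
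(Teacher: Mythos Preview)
The paper does not actually supply a proof of this lemma: it calls the result ``probably folklore'' and defers to van Douwen's paper \cite{vD}. So there is nothing in-text to compare your argument against directly.

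That said, your proof is correct and is essentially the standard argument for this fact. The reduction to a closed $\sigma$-discrete network via regularity is fine (and your parenthetical justification that an open set disjoint from $N$ is disjoint from $\overline N$ is exactly what is needed to see that closures preserve discreteness). The core step --- removing the nowhere dense boundaries $B_n=F_n\setminus\operatorname{int}(F_n)$ and showing that on the residual dense $G_\delta$ set $Y$ every network element containing a point of $Y$ does so in its interior --- is carried out cleanly: the inclusion $W\subset F_n$ together with $W$ meeting only the single member $N\in\N_n$ really does force $W\subset N$. Bing's metrization theorem then applies since $Y$, as a subspace of the regular Hausdorff space $X$, is itself $T_3$. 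No gaps.
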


Call $dis^*(X)$ the least number of \emph{closed discrete} sets required to cover $X$. Clearly $dis(X) \leq dis^*(X)$. In a $\sigma$-space, one can use a $\sigma$-discrete network to split every discrete set into a countable union of closed discrete sets. So the following lemma is clear.

\begin{lemma}  \label{prop}
If $X$ is a crowded $\sigma$-space then $dis(X)=dis^*(X)$.
\end{lemma}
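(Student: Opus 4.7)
The inequality $dis(X)\leq dis^*(X)$ is immediate from the definitions, so my task will be to show $dis^*(X)\leq dis(X)$. The paragraph preceding the lemma already sketches the strategy, and I plan to follow it: use the $\sigma$-discrete network to split every discrete set into countably many closed discrete pieces, and then absorb the resulting factor of $\omega$. Concretely, I would write $X=\bigcup_{\alpha<\kappa} D_\alpha$ with $\kappa=dis(X)$ and each $D_\alpha$ discrete, fix a $\sigma$-discrete network $\N=\bigcup_{n\in\omega}\N_n$ for $X$, and for each $x\in D_\alpha$ choose an open $V_x\ni x$ with $V_x\cap D_\alpha=\{x\}$ followed by some $N_x\in\N$ with $x\in N_x\subseteq V_x$, so that still $N_x\cap D_\alpha=\{x\}$. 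Setting $D_{\alpha,n}=\{x\in D_\alpha:N_x\in\N_n\}$ then yields $D_\alpha=\bigcup_{n\in\omega} D_{\alpha,n}$.

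The main step, and the part I expect to carry the actual content, is verifying that each $D_{\alpha,n}$ is closed and discrete in $X$. For this I would fix $y\in X$ and use discreteness of $\N_n$ to find a neighborhood $W$ of $y$ meeting at most one member of $\N_n$. If $x,x'\in W\cap D_{\alpha,n}$ were distinct, then $W$ would meet both $N_x$ and $N_{x'}$, forcing $N_x=N_{x'}$; but then $x'\in N_x\cap D_\alpha=\{x\}$, a contradiction. Hence $|W\cap D_{\alpha,n}|\leq 1$, which is precisely what it means for $D_{\alpha,n}$ to be closed and discrete in $X$.

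This produces a cover of $X$ by $\kappa\cdot\omega$ closed discrete sets, so $dis^*(X)\leq\kappa\cdot\omega$. To finish, I would observe that a nonempty crowded Hausdorff space cannot be a finite union of discrete sets: peeling off a witnessing neighborhood of any point in one layer yields a smaller nonempty crowded open subspace covered by one fewer discrete set, so a short induction bottoms out at the impossible case of a nonempty crowded discrete space. Therefore $\kappa\geq\omega$, whence $\kappa\cdot\omega=\kappa$ and the desired inequality follows. Aside from the closed-discreteness check, everything is just bookkeeping with the $\sigma$-discrete network.
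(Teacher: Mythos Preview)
Your proof is correct and follows exactly the approach the paper indicates in the sentence preceding the lemma: using the $\sigma$-discrete network to split each discrete set into countably many closed discrete pieces, then absorbing the extra factor of $\omega$ via the observation that $dis(X)\geq\omega$ for crowded spaces. The paper gives no further detail beyond that one-line sketch, so your write-up is simply a fleshed-out version of the intended argument.
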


The next lemma and its proof are essentially due to the anonymous referee.

\begin{lemma} \label{lemref}
Let $X$ be an $\omega_1$-expandable crowded Baire space such that $dis^*(X) \leq \kappa$, and $A \subset X$ with $|A| \leq \kappa$. Then $|\overline{A}| \leq \kappa$.
\end{lemma}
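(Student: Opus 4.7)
My plan is to argue by contradiction, assuming $|\overline{A}| > \kappa$, and derive a contradiction by an expansion-and-selection argument very similar in spirit to the end of the proof of Theorem \ref{th1}. The first observation is that $\kappa \geq \omega_1$: indeed, since $X$ is crowded and Baire, it cannot be covered by countably many closed discrete sets (one of them would have to be somewhere dense, hence contain an isolated point of $X$). This will allow later cardinal arithmetic like $\omega_1 \cdot \kappa = \kappa$.

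Next I would fix a cover $X = \bigcup_{\alpha < \kappa} D_\alpha$ by closed discrete sets. Then $\overline{A} = \bigcup_{\alpha < \kappa}(\overline{A} \cap D_\alpha)$, and since $|\overline{A}| > \kappa$, a simple cardinal count shows that for some $\alpha_0$ the set $S := \overline{A} \cap D_{\alpha_0}$ has cardinality strictly greater than $\kappa$. A small verification (using Hausdorffness plus the fact that $D_{\alpha_0}$ is closed discrete) shows that any subset of a closed discrete set is again closed discrete in $X$, so $S$ is closed discrete.

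Now I can invoke $\omega_1$-expandability to get a family $\U = \{U_x : x \in S\}$ of open sets with $x \in U_x$ and $ord(y,\U) \leq \omega_1$ for every $y \in X$. Since each $x \in S$ lies in $\overline{A}$, the open set $U_x$ meets $A$; choose $f(x) \in U_x \cap A$. This defines a function $f : S \to A$ whose fibers satisfy $|f^{-1}(y)| \leq |\{x \in S : y \in U_x\}| \leq ord(y,\U) \leq \omega_1$. Therefore
\[
|S| \leq |A| \cdot \omega_1 \leq \kappa \cdot \omega_1 = \kappa,
\]
contradicting $|S| > \kappa$.

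The argument is essentially routine once it is organized this way; the only point requiring any care is the verification that subsets of closed discrete sets remain closed discrete (so that the $\omega_1$-expansion hypothesis applies to $S$), and the reduction by pigeonhole to a single $D_{\alpha_0}$, which needs $\kappa \geq \omega_1$ to absorb the factor from the bounded fiber size. No heavier machinery appears to be needed.
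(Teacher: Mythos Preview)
Your proof is correct and follows essentially the same approach as the paper's. The only cosmetic difference is that the paper argues directly---showing $|\overline{A}\cap D_\alpha|\le\kappa$ for \emph{every} $\alpha$ and then summing---whereas you argue by contradiction and pigeonhole to isolate a single $\alpha_0$; the expansion-and-selection step with fibers of size $\le\omega_1$ is identical.
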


\begin{proof}
Since $X$ is Baire crowded we can assume that $\kappa \geq \omega_1$. Let $X=\bigcup_{\alpha < \kappa} D_\alpha$, where each $D_\alpha$ is closed discrete. Let $B_\alpha = \overline{A} \cap D_\alpha$. Then $B_\alpha$ is closed discrete, so we may expand it to a family of open sets $\mathcal{U_\alpha}$ such that $ord(x, \U_\alpha) \leq \omega_1$ for every $x \in X$. Then $|\mathcal{U}_\alpha|=|B_\alpha|$ and for all $U \in \mathcal{U}_\alpha$, $U \cap A \neq \emptyset$. Fix some well-ordering of $A$ and define a function $f : \mathcal{U_\alpha} \to A$ by:

$$f(U)=\min \{a \in A : a \in U \}.$$

We have that $|f^{-1}(a)| \leq \aleph_1$ for every $a \in A$, and therefore $|B_\alpha|=|\mathcal{U}_\alpha| \leq |A| \cdot \aleph_1 \leq \kappa$.

Since $\overline{A}=\bigcup_{\alpha \in \kappa} B_\alpha$ it follows that $|\overline{A}| \leq \kappa$.
\end{proof}

The statement of the next theorem is due to the anonymous referee, and improves our original theorem where $X$ was assumed to be paracompact.

\begin{theorem} \label{th2}
Let $X$ be a regular $\omega_1$-expandable Baire $\sigma$-space.
Then $dis(X) \geq \Delta(X)$.
\end{theorem}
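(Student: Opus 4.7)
The plan is to reduce to the metric case of Corollary \ref{metr} by passing to a well-behaved dense subspace. Assume for contradiction that $\tau := dis(X) < \Delta(X)$. Since $X$ is Baire and $\Delta(X) \geq 2$, $X$ is crowded and $\tau \geq \omega_1$. By Lemma \ref{prop}, $dis^*(X) = \tau$, and by Lemma \ref{douwen} there is a dense metrizable $G_\delta$ subspace $Y \subseteq X$.

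First I would check three easy transfer properties of $Y$. (i) $Y$ is Baire: if $Y = \bigcap_n U_n$ with each $U_n$ dense open in $X$, and if $\{V_n\}$ is a sequence of dense open sets in $Y$ with $V_n = W_n \cap Y$ for some open $W_n \subseteq X$, then density of $V_n$ in $Y$ together with density of $Y$ in $X$ promotes each $W_n$ to a dense open set of $X$; intersecting with any non-empty open set of $X$ and applying the Baire property of $X$ shows $\bigcap_n V_n$ meets every non-empty relatively open subset of $Y$. (ii) $Y$ is crowded, since an isolated point $y$ of $Y$ would witness $U \cap Y = \{y\}$ for some open $U \subseteq X$; density of $Y$ then forces $U = \{y\}$, so $y$ would be isolated in $X$. (iii) Any cover of $X$ by $\tau$ discrete sets restricts to a cover of $Y$ by $\tau$ sets that are discrete in $Y$, so $dis(Y) \leq \tau$.

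The main step, and the one I expect to be the crux, is the inequality $\Delta(Y) > \tau$. For any non-empty open $U \subseteq X$, density of $Y$ gives $U \cap Y \neq \emptyset$; if $|U \cap Y| \leq \tau$, then Lemma \ref{lemref} (whose hypotheses hold because $X$ is $\omega_1$-expandable, crowded, Baire with $dis^*(X) \leq \tau$) forces $|\overline{U \cap Y}| \leq \tau$. But $U \cap Y$ is dense in $U$, so $U \subseteq \overline{U \cap Y}$ and hence $|U| \leq \tau < \Delta(X)$, a contradiction. Every non-empty open subset of $Y$ has the form $U \cap Y$ for some non-empty open $U \subseteq X$, so $\Delta(Y) > \tau$.

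Applying Corollary \ref{metr} to the crowded Baire metric space $Y$ yields $dis(Y) \geq \Delta(Y) > \tau$, contradicting $dis(Y) \leq \tau$. The heart of the argument is the third paragraph: Lemma \ref{lemref} is precisely the bridge converting ``small trace of an open set on $Y$'' into ``small open set in $X$'', and without it there is no direct way to bound $\Delta(Y)$ from below in terms of $\Delta(X)$.
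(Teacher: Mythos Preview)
Your argument is correct and follows essentially the same route as the paper: pass to the dense metrizable $G_\delta$ subspace provided by Lemma~\ref{douwen}, use Lemma~\ref{lemref} to push $\Delta$ down to that subspace, and finish with Corollary~\ref{metr}. You have simply spelled out the auxiliary verifications (Baire, crowded, $dis(Y)\le\tau$) that the paper leaves implicit.
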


\begin{proof}
Fix some dense metrizable $G_\delta$ subspace $M \subset X$ and suppose by contradiction that $dis^*(X)=dis(X) < \Delta(X)$. Then Lemma $\ref{lemref}$ implies that $\Delta(M) \geq \Delta(X)$ and, since $M$ is Baire metric, by Corollary $\ref{metr}$ we have $dis(X) \geq dis(M) \geq \Delta(M)$. So $dis(X) \geq \Delta(X)$, and we are done.
\end{proof}

\begin{corollary} \label{cor}
For every paracompact Baire $\sigma$-space $X$ (in particular, for every stratifiable Baire space), we have $dis(X) \geq \Delta(X)$.
\end{corollary}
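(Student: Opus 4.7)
The plan is to deduce this directly from Theorem \ref{th2}. The hypotheses ``regular'', ``Baire'' and ``$\sigma$-space'' are already assumed in the statement of the corollary, so the only point left to check is that $X$ is $\omega_1$-expandable. Once that is established, Theorem \ref{th2} immediately yields $dis(X) \geq \Delta(X)$.

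To verify $\omega_1$-expandability I would invoke the classical theorem that every paracompact Hausdorff space is collectionwise normal. Given any closed discrete set $D \subset X$, collectionwise normality produces a \emph{pairwise disjoint} family $\G = \{U_x : x \in D\}$ of open neighbourhoods with $x \in U_x$ for each $x \in D$. In particular $ord(y, \G) \leq 1 \leq \omega_1$ for every $y \in X$, so $X$ is actually $1$-expandable, which is much stronger than required. Note that regularity, demanded by Theorem \ref{th2}, is also automatic, since paracompact Hausdorff spaces are even normal.

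For the parenthetical remark about stratifiable spaces, I would cite the standard facts that every stratifiable space is (hereditarily) paracompact and admits a $\sigma$-discrete network. Consequently any Baire stratifiable space is a paracompact Baire $\sigma$-space, and the first part of the corollary applies. There is no real obstacle here: the content of the corollary is simply that paracompactness packages the $\omega_1$-expandability hypothesis of Theorem \ref{th2} in a more familiar form, and the substantive work has already been done in the proofs of Lemma \ref{lemref} and Theorem \ref{th2}.
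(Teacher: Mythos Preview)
Your proposal is correct and follows exactly the derivation the paper intends: the corollary is stated without proof because paracompact Hausdorff spaces are collectionwise normal (hence $1$-expandable, a fortiori $\omega_1$-expandable) and regular, so Theorem~\ref{th2} applies directly; the stratifiable case is the standard fact that stratifiable spaces are paracompact $\sigma$-spaces. One minor wording slip: you first say regularity is ``already assumed in the statement of the corollary'', but it is not---you correctly recover it later from paracompactness, so just drop that phrase.
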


Notice that in the proofs of Theorems $\ref{th1}$ and $\ref{th2}$ all one needs is that $X$ be $dis(X)$-expandable.

Also, while we didn't use any separation other than Hausdorff in Theorem $\ref{th1}$, regularity seems to be essential in Theorem $\ref{th2}$, since one needs a $\sigma$-discrete network consisting of closed sets to prove Lemma $\ref{douwen}$. This suggests the following question.

\begin{question}
Is there a collectionwise Hausdorff or meta-Lindel\"of (non regular) Baire $\sigma$-space $X$ such that $dis(X) < \Delta(X)$?
\end{question}

\section{Good spaces with bad covers}

We now offer two examples to show that $\omega_1$-expandability is
essential in Theorem $\ref{th2}$. The first one is a modification of
an example of Bailey and Gruenhage \cite{BG}. We will need the
following combinatorial fact which slightly generalizes Lemma 9.23
of \cite{J}. It must be well-known, but we include a proof anyway
since we couldn't find a reference to it.

\begin{lemma}
Let $\kappa$ be any infinite cardinal. There is a family $\mathcal{A} \subset [\kappa]^{cf(\kappa)}$ of cardinality $\kappa^+$ such that $|A \cap B| < cf(\kappa)$ for every $A, B \in \mathcal{A}$.
\end{lemma}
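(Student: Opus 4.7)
The plan is to construct $\mathcal{A}=\{A_\gamma:\gamma<\kappa^+\}$ by transfinite recursion on $\gamma$, writing $\lambda=cf(\kappa)$. Since $|\lambda\times\kappa|=\kappa$, I would initialize with any partition of $\kappa$ into $\kappa$ pairwise disjoint blocks of size $\lambda$ to serve as $\{A_\gamma:\gamma<\kappa\}$; from stage $\gamma\geq\kappa$ onwards one has $|\gamma|=\kappa$ and can enumerate $\{A_\beta:\beta<\gamma\}$ injectively as $\{B_\eta:\eta<\kappa\}$, so the task reduces to producing $A_\gamma\in[\kappa]^\lambda$ with $|A_\gamma\cap B_\eta|<\lambda$ for every $\eta<\kappa$. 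I would then split the construction of $A_\gamma$ according to whether $\kappa$ is regular or singular.

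In the regular case $(\lambda=\kappa)$, I would build $A_\gamma=\{b_\alpha:\alpha<\kappa\}$ by picking
\[
b_\alpha\in B_\alpha\setminus\Bigl(\{b_{\alpha'}:\alpha'<\alpha\}\cup\bigcup_{\alpha'<\alpha}B_{\alpha'}\Bigr).
\]
The inductive hypothesis yields $|B_\alpha\cap B_{\alpha'}|<\kappa$ for each $\alpha'<\alpha$; since $|\alpha|<\kappa$ and $\kappa$ is regular, the full intersection $|B_\alpha\cap\bigcup_{\alpha'<\alpha}B_{\alpha'}|$ also stays $<\kappa$, so the set from which $b_\alpha$ is drawn has size $\kappa$. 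Almost disjointness then follows from $A_\gamma\cap B_{\alpha_0}\subseteq\{b_\alpha:\alpha\leq\alpha_0\}$ for every $\alpha_0<\kappa$.

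In the singular case $(\lambda<\kappa)$, I would first fix a strictly increasing cofinal sequence $\langle\kappa_\alpha:\alpha<\lambda\rangle$ of cardinals below $\kappa$ with $\kappa_0\geq\lambda$; this is possible because $\lambda^+<\kappa$ when $\kappa$ is singular (otherwise $\kappa=\lambda^+$ would be regular). I would then set $A_\gamma=\{b_\alpha:\alpha<\lambda\}$ by picking
\[
b_\alpha\in[\kappa_\alpha,\kappa_{\alpha+1})\setminus\bigcup_{\eta<\kappa_\alpha}B_\eta.
\]
The excluded union has cardinality at most $\kappa_\alpha\cdot\lambda=\kappa_\alpha<\kappa_{\alpha+1}$, so the choice is always available; the intervals $[\kappa_\alpha,\kappa_{\alpha+1})$ are pairwise disjoint so $|A_\gamma|=\lambda$ automatically. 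For fixed $\eta<\kappa$ cofinality of the sequence gives $\alpha_0<\lambda$ with $\kappa_{\alpha_0}>\eta$, and for every $\alpha\geq\alpha_0$ one has $b_\alpha\notin B_\eta$, whence $A_\gamma\cap B_\eta\subseteq\{b_\alpha:\alpha<\alpha_0\}$ has size $<\lambda$.

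The main obstacle is the singular case, specifically arranging that $\kappa_0\geq\lambda$ so that the block bound $\kappa_\alpha\cdot\lambda=\kappa_\alpha<\kappa_{\alpha+1}$ holds and the choice of $b_\alpha$ can always be made; this in turn rests on the arithmetic fact that $\lambda^+<\kappa$ for singular $\kappa$. The regular case, by contrast, depends only on the standard disjointing trick permitted by the regularity of $\kappa$.
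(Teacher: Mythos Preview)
Your argument is correct but follows a genuinely different route from the paper's. The paper avoids the regular/singular case split by working with functions rather than sets: it builds a family $\mathcal{F}$ of $\kappa^+$ functions $f\colon cf(\kappa)\to\kappa$ any two of which agree on fewer than $cf(\kappa)$ coordinates, and then lets $\mathcal{A}$ consist of the graphs of these functions inside $cf(\kappa)\times\kappa$. The diagonalization is uniform: given $\{f_\alpha:\alpha<\kappa\}$ already built and a cofinal sequence $\langle\kappa_\tau:\tau<cf(\kappa)\rangle$ in $\kappa$, one defines the next $f$ by choosing $f(\tau)\neq f_\alpha(\tau)$ for every $\alpha<\kappa_\tau$; this is possible since only $\kappa_\tau<\kappa$ values are forbidden, and then $\{\tau:f(\tau)=f_\alpha(\tau)\}$ is bounded in $cf(\kappa)$ for each $\alpha$. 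Your approach works directly inside $[\kappa]^{cf(\kappa)}$ and is perhaps more transparent about where the almost-disjointness comes from, but the price is the case distinction and the small arithmetic check $\lambda^+<\kappa$ in the singular case; the paper's function-and-graph trick absorbs all of that into the single observation that $\kappa_\tau<\kappa$.
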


\begin{proof}
We begin by showing that there is a family $\mathcal{F}$ of functions from $cf(\kappa)$ to $\kappa$ such that $|\mathcal{F}|=\kappa^+$ and $|\{\alpha \in cf(\kappa): f(\alpha)=g(\alpha) \}|< cf(\kappa)$, for any $f,g \in \mathcal{F}$. Indeed, suppose we have constructed $\{f_\alpha: \alpha < \kappa \}$ with the stated property. Let $\kappa=\sup_{\alpha < cf(\kappa)} \kappa_\alpha$. Define $f: cf(\kappa) \to \kappa$ in such a way that $f(\tau) \neq f_\alpha(\tau)$, for every $\alpha < \kappa_\tau$ and $\tau \in cf(\kappa)$. Fix $\alpha \in \kappa$: if $\tau < cf(\kappa)$ is such that $f(\tau)=f_\alpha(\tau)$ we must have $\kappa_\tau \leq \alpha < \kappa$. Hence $|\{\tau \in cf(\kappa): f(\tau)=f_\alpha(\tau) \}|<cf(\kappa)$.

Now for $\mathcal{A}$ we can take (on $cf(\kappa) \times \kappa$) the family of graphs of functions in $\mathcal{F}$.
\end{proof}

\begin{example} \label{ex1}
(ZFC) A regular Baire $\sigma$-space $P$ for which $dis(P) < \Delta(P)$.
\end{example}

\begin{proof}
Fix an almost disjoint family $\mathcal{A} \subset [\mathfrak{c}]^{cf(\mathfrak{c})}$ such that $|\mathcal{A}|=\mathfrak{c}^+$. For every partial function $\sigma \in \mathfrak{c}^{<\omega}$ such that $dom(\sigma)=k$ let $\L_\sigma=\{f_{\sigma,A} : A \in \mathcal{A} \}$ where $f_{\sigma,A}: cf(\mathfrak{c}) \to \mathfrak{c}^{<\omega}$ is defined as follows: $dom(f_{\sigma, A}(\alpha))=k+1$, $f_{\sigma,A}(\alpha) \upharpoonright k=\sigma$ for every $\alpha \in cf(\mathfrak{c})$ and $\{f_{\sigma,A}(\alpha)(k) : \alpha \in cf(\mathfrak{c}) \}$ is a faithful enumeration of $A$.

When $f \in \L_\sigma$ we will refer to $\rho_f=\sigma$ as the \emph{root} of $f$, and set $k_f=dom(\sigma)$.

Let now $L=\bigcup_{\sigma \in \mathfrak{c}^{<\omega}} \L_\sigma$ and $B=\mathfrak{\mathfrak{c}}^\omega$. We are going to define a topology on $P=B \cup L$ that induces on $B$ its natural topology. For every $\sigma \in \mathfrak{c}^{<\omega}$, let $[\sigma]=\{g \in B: g \supset \sigma \}$ and $$B(\sigma)=[\sigma] \cup \{f \in L: \rho_f \supseteq \sigma \}.$$ Let $\{A_n: n \in \omega \}$ be a partition of $\mathfrak{c}$ into sets of cardinality $\mathfrak{c}$.

For $f \in L$, $\delta \in cf(\mathfrak{c})$ and $k \in \omega$ let

$$B_{\delta, k}(f) =\{f\} \cup \bigcup_{\gamma > \delta} \left \{ B(f(\gamma)): f(\gamma)(k_f) \in \bigcup_{n > k} A_n \right \}.$$

The set $\mathcal{B}=\{B(\sigma), B_{\delta,k}(f) : \sigma \in \mathfrak{c}^{<\omega}, \delta \in cf(\mathfrak{c}), k \in \omega \}$ is a base for a topology on $P$, as items (2) and (3) in the following list of claims show.

\begin{enumerate}

\item For $\sigma_1, \sigma_2 \in \mathfrak{c}^{<\omega}$, $B(\sigma_1) \cap B(\sigma_2)=\emptyset$ if and only if $\sigma_1$ and $\sigma_2$ are incompatible.
\item Suppose $B(\sigma) \cap B_{\delta,k} (f) \neq \emptyset$. Then $\sigma \subseteq \rho_f$ or $\rho_f \subseteq \sigma$. If $\sigma \subseteq \rho_f$ then $B(\sigma) \cap B_{\delta,k}(f)=B_{\delta,k}(f)$. If $\sigma \supsetneq \rho_f$, then the intersection is $B(\sigma)$.

\item If $B_{\delta,j}(f) \cap B_{\delta',k}(g) \neq \emptyset$ and $\rho_g \subsetneq \rho_f$ then the intersection is either $B_{\delta, j}(f)$ or a set of the form $B(\sigma)$, for some $\sigma \in \{f(\gamma), g(\gamma'): \gamma > \delta, \gamma' > \delta' \}$.

\item If $B_{\delta,j}(f) \cap B_{\delta',k}(g) \neq \emptyset$ and $\rho_g=\rho_f$ then the intersection is a union of less than $cf(\mathfrak{c})$ sets of the form $B(\sigma)$ where $\sigma \in ran(f) \cap ran(g)$.
\end{enumerate}

\begin{proof}[Proof of items (1)-(4)]

Item (1) is easy. For item (2), observe that $B_{\delta,k}(f) \subseteq B(\rho_f)$, so $B(\rho_f) \cap B(\sigma) \neq \emptyset$ which implies that $\rho_f$ and $\sigma$ are compatible. If $\sigma \subseteq \rho_f$ then for each $\gamma > \delta$ we have $\sigma \subseteq f(\gamma)$ and $f \in B(\sigma)$, so $B_{\delta,k}(f) \subseteq B(\sigma)$.

If $\sigma \supsetneq \rho_f$ then let $\gamma > \delta$ be the unique ordinal such that $B(\sigma) \cap B(f(\gamma)) \neq \emptyset$. Since $\sigma$ and $f(\gamma)$ are compatible we must have $f(\gamma) \subset \sigma$, from which $B(\sigma) \subset B(f(\gamma))$ follows, and hence the claim.

To prove item (3) observe that if $B_{\delta,j}(f) \cap B_{\delta',k}(g) \neq \emptyset$ and $\rho_g \subsetneq \rho_f$ then $g \notin B_{\delta,j}(f)$ and, as the range of $f$ consists of pairwise incompatible elements we have that $[g(\tau)] \cap [\rho_f] \neq \emptyset$ for at most one $\tau \in cf(\mathfrak{c})$. Therefore, $B_{\delta,j}(f) \cap B_{\delta',k}(g)=B(g(\tau)) \cap B_{\delta,j}(f)$, and the rest follows from item (2).

Item (4) follows from \emph{almost-disjointness} of the ranges.
\end{proof}

\vspace{.1in}
\noindent \textbf{Claim 1:} The base $\mathcal{B}$ consists of clopen sets.

\begin{proof}[Proof of Claim 1.] To see that $B_{\delta,j}(f)$ is closed pick $g \in L \setminus B_{\delta,j}(f)$ and let $\gamma$ be large enough so that $f \notin B_{\gamma,j}(g)$. Suppose that $B_{\delta,j}(f) \cap B_{\gamma,j}(g) \neq \emptyset$. Then there are $\alpha>\delta$ and $\beta>\gamma$ such that $f(\alpha)$ and $g(\beta)$ are compatible. Now we must have $\rho_g=\rho_f$ or otherwise we would have either $\rho_f \supset g(\beta)$ and hence $f \in B_{\gamma,j}(g)$, or $\rho_g \supset f(\alpha)$, which would imply $g \in B_{\delta,j}(f)$. So, by item (4) we have $B_{\delta,j}(f) \cap B_{\gamma,j}(g)=\bigcup_{\tau \in C} B(g(\tau))$ where $|C| < cf(\mathfrak{c})$ and hence, if we let $\theta > \sup(C)$, then $B_{\theta,j}(g) \cap B_{\delta,j}(f)=\emptyset$.

Now, let $p \in B \setminus B_{\delta,j}(f)$ and $i=k_f+2$. We claim that $B(p \upharpoonright i) \cap B_{\delta,j}(f)=\emptyset$. Indeed, if that were not the case then $f(\gamma)$ and $p \upharpoonright i$ would be compatible, for some $\gamma$. So $f(\gamma) \subset p \upharpoonright i \subset p$, which implies $p \in B_{\delta,j}(f)$, contradicting the choice of $p$.

To see that $B(\sigma)$ is clopen, observe that $B$ is dense in $P$ and the subspace base is clopen, so we can restrict our attention to limit points of $B(\sigma)$ in $L$. Suppose that $f \in L \setminus B(\sigma)$ is some limit point, then, for all $\delta \in cf(\mathfrak{c})$ and all $j \in \omega$ we have $B_{\delta,j}(f) \cap B(\sigma) \neq \emptyset$. So $\rho_f$ and $\sigma$ are compatible; moreover $\rho_f \subsetneq \sigma$ or otherwise $f \in B(\sigma)$. Now there is at most one $\delta'$ such that $f(\delta')$ and $\sigma$ are compatible, whence the absurd statement $B_{\delta'+1,0}(f) \cap B(\sigma)=\emptyset$.
\renewcommand{\qedsymbol}{$\triangle$}
\end{proof}

\vspace{.1in}
\noindent \textbf{Claim 2:} $P$ is a $\sigma$-space.

\begin{proof}[Proof of Claim 2.]
For each $\sigma \in \mathfrak{c}^{< \omega}$ let $h(\sigma) \in \omega^{< \omega}$ be defined by $\sigma(i) \in A_j$ iff $h(\sigma)(i)=j$. For every $s \in \omega^{< \omega}$ put $\mathcal{B}_s=\{B(\sigma): h(\sigma)=s \}$. We claim that $\mathcal{B}_s$ is a discrete collection of open sets. Notice that the elements of $\mathcal{B}_s$ are all disjoint. Now if $x \in B \setminus \bigcup \B_s$, let $j=dom(s)$; then either $x \upharpoonright (j+1)$ extends (at most) one $\sigma$ such that $h(\sigma)=s$ or $x \upharpoonright (j+1)$ is incompatible with every such $\sigma$. So $B(x \upharpoonright (j+1))$ will hit at most one element of $\mathcal{B}_s$. If $f \in L$ then let $l=\max(ran(s))$: we claim that $B_{0,l}(f)$ hits at most one element of $\B_s$. Indeed, for fixed $\alpha$ such that $f(\alpha)(k_f) \in \bigcup_{n>l} A_n$ either $f(\alpha)$ is incompatible with every $\sigma$ such that $h(\sigma)=s$ or there is exactly one such $\sigma$ which is compatible with $f(\alpha)$. In the latter case we can't have $\sigma \supset \rho_f$ because $f(\alpha)(k_f) \notin ran(s)$, hence we have $\sigma \subset \rho_f$, which implies $B_{0,l} (f) \subset B(\sigma)$.

Now we claim that $L$ is a $\sigma$-closed discrete set. Indeed, for every $s \in \omega^{<\omega}$,  set $L_s=\{f \in L : h(\rho_f)=s \}$. If $g \in L_s$ then every fundamental neighbourhood of $g$ hits $L_s$ in the single point $g$. If $g \notin L_s$ then either $\rho_g$ is incompatible with every $\rho_f$ such that $f \in L_s$, in which case every fundamental neighbourhood of $g$ misses $L_s$, or there is $f \in L_s$ such that $\rho_g$ and $\rho_f$ are compatible. If $\rho_g \subsetneq \rho_f$ then let $l=s(k_g)$: we have $B_{0,l}(g) \cap L_s=\emptyset$. If $\rho_f \subset  \rho_g$, then the root of every function of $L$ which is in a fundamental neighbourhood of $g$ has domain strictly larger than $dom(s)$ and hence every fundamental neighbourhood of $g$ misses $L_s$.
\renewcommand{\qedsymbol}{$\triangle$}
\end{proof}

Observe now that $P$ is Baire, because $B \subset P$ is a dense Baire subset. Also, $dis(P)=\mathfrak{c} < \mathfrak{c}^+=\Delta(P)$
\end{proof}

One of the properties of Bailey and Gruenhage's example that was lost in the modification is first-countability. This suggests the following question.

\begin{question}
Is there in ZFC a first-countable regular $\sigma$-space $X$ for which $dis(X) < \Delta(X)$?
\end{question}

The reason why we insist on a ZFC example, is that we already have a consistent answer to the previous question. In fact, the space we are now going to exhibit is first-countable, normal and shows that $\omega_1$-expandability cannot be weakened to $\omega_2$-expandability in Theorem $\ref{th1}$. Our original motivation for constructing this example was showing that paracompactness could not be weakened to normality in Corollary $\ref{cor}$.

Recall that a \emph{$Q$-set} is an uncountable subset of a Polish space whose every subset is a relative $F_\sigma$, and a \emph{Luzin set} is an uncountable subset of a Polish space $P$ which meets every first category set of $P$ in a countable set. The existence of $Q$-sets and Luzin sets in the reals is known to be independent of ZFC (see, for example, \cite{M}). Fleissner and Miller \cite{FM} constructed a model of ZFC where there are a $Q$-set of the reals of cardinality $\aleph_2$ and a Luzin set of the reals of cardinality $\aleph_1$.

\begin{lemma} \label{lemma}
Let $C$ be some Polish space having a base $\B=\{B_n : n \in \omega \}$ such that $B_n$ is homeomorphic to $C$ for every $n \in \omega$. Given a $Q$-set of cardinality $\aleph_2$ in $C$, there is one which is dense and has dispersion character $\aleph_2$. Given a Luzin set in $C$, there is one which is locally uncountable and dense.
\end{lemma}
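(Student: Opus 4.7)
The plan is to spread a copy of the given set into each basis element using the hypothesized homeomorphisms, then take the union. Concretely, fix homeomorphisms $h_n : C \to B_n$ for each $n \in \omega$; given a $Q$-set $Q \subset C$ of cardinality $\aleph_2$ set $Q' = \bigcup_{n \in \omega} h_n(Q)$, and given a Luzin set $L \subset C$ set $L' = \bigcup_{n \in \omega} h_n(L)$.

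Density and local abundance fall out immediately: since $\{B_n : n \in \omega\}$ is a base, any non-empty open $U \subseteq C$ contains some $B_m$, so $U \cap Q' \supseteq h_m(Q)$ has cardinality $\aleph_2$ (yielding density and $\Delta(Q') = \aleph_2$), while $U \cap L' \supseteq h_m(L)$ is uncountable (yielding density and local uncountability of $L'$).

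For the $Q$-set property of $Q'$, I would first note that each $h_n(Q)$ is a $Q$-set inside $B_n$, since being a $Q$-set is a topological invariant. Because $B_n$ is open, hence $F_\sigma$, in the metrizable space $C$, every set that is $F_\sigma$ in $B_n$ is also $F_\sigma$ in $C$; therefore every subset of $h_n(Q)$ is $F_\sigma$ in $C$. Any $S \subseteq Q'$ decomposes as $\bigcup_n (S \cap h_n(Q))$, a countable union of sets that are $F_\sigma$ in $C$, so $S$ itself is $F_\sigma$ in $C$ and hence a relative $F_\sigma$ in $Q'$. For the Luzin property, each $h_n(L)$ is Luzin in $B_n$ because homeomorphisms preserve Baire category, so if $M \subseteq C$ is first category then $h_n(L) \cap M = h_n(L) \cap (M \cap B_n)$ is countable, which makes $L' \cap M$ a countable union of countable sets, hence countable.

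The only mildly subtle point is checking that a countable union of $Q$-sets living in different open pieces of $C$ is again a $Q$-set in $C$; this relies on the fact that open subsets of a metric space are $F_\sigma$, so the $F_\sigma$ property transfers cleanly from each $B_n$ up to $C$. Everything else is routine bookkeeping based on $\{B_n\}$ being a base.
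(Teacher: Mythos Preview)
Your approach differs from the paper's: the paper first passes from the given $Q$-set $X$ to $Y=X\setminus\bigcup\{B\in\B:|B\cap X|<\aleph_2\}$, which already has $\Delta(Y)=\aleph_2$, and then inductively places pairwise disjoint homeomorphic copies of $Y$ into selected basic open sets so that every $B_n$ is eventually met. You skip the trimming step and simply drop a copy into every $B_n$. For the Luzin half your argument is correct and in fact more explicit than the paper's ``similarly.''

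For the $Q$-set half, however, your justification contains a real error. You claim that ``every subset of $h_n(Q)$ is $F_\sigma$ in $C$,'' using that $F_\sigma$ subsets of the open set $B_n$ are $F_\sigma$ in $C$. But the $Q$-set property only gives that each subset of $h_n(Q)$ is \emph{relatively} $F_\sigma$ in $h_n(Q)$, i.e.\ is the trace on $h_n(Q)$ of some $F_\sigma$ subset of $C$; it does not make the subset $F_\sigma$ in $B_n$, let alone in $C$. Indeed no uncountable $Q$-set is Borel in the ambient Polish space (an uncountable Borel set contains a perfect set and hence has $2^{\mathfrak c}$ subsets but only $\mathfrak c$ relatively $F_\sigma$ ones), so already $h_n(Q)$ itself refutes your claim. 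Thus the decomposition $S=\bigcup_n(S\cap h_n(Q))$ does not exhibit $S$ as a countable union of $F_\sigma$ sets in $C$, and the passage to ``relative $F_\sigma$ in $Q'$\,'' is precisely the point that requires an argument. The paper, for its part, also asserts without proof that its union $Z=\bigcup_n Z_n$ is a $Q$-set; but you should be aware that what you wrote is not a proof of this fact.
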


\begin{proof}
Let $X$ be a $Q$-set in $C$. Let $\B'=\{B \in \B: |B \cap X|<\aleph_2 \}$. Then $Y=X \setminus \bigcup \B'$ is a $Q$-set such that $\Delta(Y)=\aleph_2$. Set $n_0=0$ and let $Z_0$ be a homeomorphic copy of $Y$ inside $B_{n_0}$. Set $Z=Z_0$ and let $n_1$ be the least integer such that $B_{n_1} \cap Z =\emptyset$: clearly $n_1>n_0$. Now let $Z_1 \subset B_{n_1}$ be a homeomorphic copy of $Y$ and set $Z=Z_0 \cup Z_1$. Now suppose you have constructed a $Q$-set $Z$ such that $Z \cap B_i \neq 0$ for every $1 \leq i \leq n_{k-1}$ and let $n_k$ be the least integer such that $Z \cap B_{n_k}=\emptyset$; let $Z_k \subset B_{n_k}$ be a homeomorphic copy of $Y$ into $B_{n_k}$. At the end of the induction let $Z=\bigcup_{n \in \omega} Z_n$, then $Z$ is a $Q$-set with the stated properties. The second statement is proved in a similar way.
\end{proof}

\begin{example} \label{ex2}
A normal Baire Moore space $X$ for which $dis(X) < \Delta(X)$.
\end{example}

\begin{proof}
Take a model of ZFC where there are a Luzin set $L' \subset \mathbb{R}$ and a $Q$-set $Z \subset \mathbb{R}$ with the properties stated in Lemma $\ref{lemma}$. Let $f: \mathbb{R} \setminus \mathbb{Q} \to (\mathbb{R} \setminus \mathbb{Q})^2$ be any homeomorphism. Then $L=f(L' \setminus \mathbb{Q})$ is a Luzin subset of $(\mathbb{R} \setminus \mathbb{Q})^2$, and by Lemma $\ref{lemma}$ we can assume that it is locally uncountable and dense. Let $\mathbb{Q}=\{q_n: n \in \omega \}$ be an enumeration and set $Z_n=Z \times \{q_n\}$. Set $T=\bigcup_{n \in \omega} Z_n$ and define a topology on $X=L \cup T$ as follows: points of $L$ have neighbourhoods just as in the Euclidean topology on the plane, while a neighbourhood of a point of $x \in Z_n$ is a disk tangent at $x$ to $Z_n$, and lying in the upper half plane relative to that line.
Notice that $L$ is dense in $X$ so $X$ is a Baire space. Moreover $\Delta(X)=\aleph_2 > \aleph_1=dis(X)$.

To prove that $X$ is normal let $H$ and $K$ be disjoint closed sets. It will be enough to show that $H$ has a countable open cover, such that the closure of every member of it misses $K$ (see Lemma 1.1.15 of \cite{En}).  Fix $n \in \omega$. We have $H \cap Z_n=\bigcup_{j \in \omega} H_j$, where $H_j$ is closed in the Euclidean topology on $Z_n$ for every $j \in \omega$. Fix $j \in \omega$. For each $x \in H_j$ let $D(x, r_x)$ be a disk tangent to $Z_n$ at $x$ such that $D(x, r_x) \cap K=\emptyset$ and $r_x=\frac{1}{k}$ for some $k \in \omega$. Let $U=\bigcup_{x \in H_j} D(x, r_x)$. First of all, we claim that no point of $K \cap Z_n$ is in $\overline{U}$: indeed if $x \in K \cap Z_n$ then let $I_x$ be an interval containing $x$ and missing $H_j$, then the closest that a point of $H_j$ can come to $x$ is one of the endpoints of $I_x$ so there is room enough to separate $x$ from $U$ by a tangent disk.

Now $U=\bigcup_{n \in \omega} U_n$, where $U_n=\bigcup \{D(x,r_x): r_x=\frac{1}{n} \}$. Let $V_n=\bigcup \{D(x,\frac{r_x}{2}): r_x=\frac{1}{n} \}$. We claim that $\overline{V}_n \cap K \setminus Z_n=\emptyset$: indeed, if some point $x \in K \setminus Z_n$ were limit for $V_n$ then we would have a sequence of disks of radius $\frac{1}{2n}$ clustering to it. But then $x \in U_n$, which contradicts $U \cap K=\emptyset$.

To separate points of $H \setminus T$ from $K$ just choose for each such point an open set whose closure misses $K$ and use second countabiliy of $L$. That shows how to define the required countable open cover of $H$.

Finally, a development for $X$ is provided by $\G_n=\{D(x,n) : x \in X \}$ where $D(x,n)=B(x, \frac{1}{n}) \setminus  \bigcup_{i<n} Z_i$ if $x \in L$, while if $x \notin L$, $D(x,n)$ is a tangent disk of radius less than $\frac{1}{n}$ which misses $\bigcup \{Z_i: i<n$ and $x \notin Z_i \}$.
\end{proof}

The cardinal $\aleph_2$ can be replaced by any cardinal not greater than $\mathfrak{c}$, under proper set theoretic assumptions (see \cite{FM}). So the previous example shows that the gap between $dis(X)$ and $\Delta(X)$ for normal Baire Moore spaces can be as big as the gap between the first uncountable cardinal and the continuum.

Since normal Moore spaces are, consistently, metrizable, there is no chance of getting in ZFC a space with all the properties of Example $\ref{ex2}$. Nevertheless, the following question remains open.

\begin{question}
Is there in ZFC a normal Baire $\sigma$-space $X$ for which $dis(X) < \Delta(X)$?
\end{question}

Using a $Q$-set on a tangent disk space to get normality is an old trick (see for example \cite{T}). Also, to get a regular Baire Moore space $X$ for which $dis(X) < \Delta(X)$ it actually suffices to assume the negation of CH along with the existence of a Luzin set.

A potential way of weakening the set theoretic assumption in Example $\ref{ex2}$ would be to replace \emph{Luzin set} with \emph{Baire subset of cardinality $\aleph_1$}, but even such an object would be inconsistent with MA+ $\neg$ CH, while the presence of CH would make the whole construction worthless, so we have no clue even about the following.

\begin{question}
Is there, at least under MA+ $\neg$ CH or under CH, a normal Baire $\sigma$-space $X$ for which $dis(X) < \Delta(X)$?
\end{question}

Also, notice that no regular Baire $\sigma$-space $X$ for which $dis(X) < \Delta(X)$ can be separable under CH. That is because any regular separable space with points $G_\delta$ has cardinality $\leq \mathfrak{c}$ (fix any dense countable set $D$, then, the map taking any closed neighbourhod to its intersection with $D$ is 1-to-1. So there are no more than $\mathfrak{c}$ closed neighbourhods in the space, but every point in a regular space with $G_\delta$ points is the intersection of countably many closed neighbourhods). Thus $dis(X)=\aleph_1 \geq \Delta(X)$ if CH holds.

\section{Linearly ordered spaces}

Recall that a space is called a \emph{GO space} if it embeds in a LOTS. We denote by $m(X)$ the minimum number of metrizable spaces needed to cover $X$. The following result is due to Ismail and Szymanski.

\begin{lemma} \cite{IS}
Let $X$ be a locally compact Lindel\"of GO space. Then $w(X) \leq \omega  \cdot m(X)$
\end{lemma}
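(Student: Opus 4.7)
The plan is a two-step reduction, finishing with a base construction that exploits the order structure of $X$.

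\textbf{Reduction to the compact case.} Since $X$ is locally compact and Lindel\"of, $X$ is $\sigma$-compact; pick an exhaustion $X=\bigcup_{n\in\omega}K_n$ with each $K_n$ compact and $K_n\subseteq\mathrm{int}_X(K_{n+1})$. Then $\{\mathrm{int}_X(K_n)\}_n$ is an open cover of $X$, so a base for $X$ can be assembled by pasting bases of the $K_n$, giving $w(X)\leq\omega\cdot\sup_n w(K_n)$. The restriction of the metrizable cover of $X$ to each $K_n$ yields a cover of $K_n$ by at most $m(X)$ metrizable subspaces, so $m(K_n)\leq m(X)$. It therefore suffices to prove $w(K)\leq\omega\cdot m(K)$ for every compact GO space $K$.

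\textbf{Compact GO case.} Let $K=\bigcup_{\alpha<\kappa}M_\alpha$ with each $M_\alpha$ metrizable and $\kappa=m(K)$. For each $\alpha$ fix a $\sigma$-discrete base $\B_\alpha$ of the subspace topology on $M_\alpha$, and for each $B\in\B_\alpha$ choose an open $U_B\subseteq K$ with $U_B\cap M_\alpha=B$. Because $K$ is a LOTS, I would rewrite each $U_B$ as a union of its maximal order-convex open subintervals, with endpoints drawn from a controlled ambient set (taken from $\bigcup_\alpha M_\alpha$ together with a fixed small order-dense subset of $K$). The collection of all such canonical intervals across all $B$ and $\alpha$ — augmented by explicitly chosen one-sided order neighborhoods at those points of $K$ that have uncountable character — would form the candidate base. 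A count of endpoints then bounds the total cardinality by $\omega\cdot\kappa$.

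\textbf{Main obstacle.} The real difficulty lies with the points $x\in K$ of uncountable character: such an $x$ must be isolated on at least one side in every metrizable $M_\alpha$ containing it (as witnessed by the top point of $\omega_1+1$), so the subspace bases $\B_\alpha$ contribute no one-sided neighborhood information at $x$ in $K$. To finish the base construction one must adjoin one-sided order neighborhoods at each such ``bad'' point while controlling their total number within the budget $\omega\cdot\kappa$. The plan is to associate to each bad point $x$ a cofinal (or coinitial) sequence inside some $M_\beta$ meeting the non-isolated side at $x$, and to index the required one-sided neighborhoods by data already drawn from the cover $\{M_\alpha\}$. This cofinality-bookkeeping step, which ties one-sided characters in $K$ to the metrizable cover through the linear order, is the technical heart of the argument and is where the GO hypothesis is essential.
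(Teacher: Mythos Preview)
The paper does not give its own proof of this lemma: it is stated with a citation to Ismail and Szymanski \cite{IS} and used as a black box in the subsequent theorem. There is therefore no in-paper argument to compare your proposal against.

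As for your outline on its own merits: the reduction to the compact case via $\sigma$-compactness is fine, and you have correctly located the crux at points of uncountable one-sided character. But your proposal stops precisely there, labeling the cofinality-bookkeeping step ``the technical heart of the argument'' without carrying it out. What must actually be shown is that the total supply of endpoints needed for the one-sided order neighborhoods at bad points stays within the budget $\omega\cdot\kappa$; you gesture at associating to each bad point a cofinal sequence inside some $M_\beta$, but you do not explain why distinct bad points do not force the introduction of more than $\omega\cdot\kappa$ new endpoints, nor why the ``controlled ambient set'' of endpoints you allude to earlier has the right size. In short, this is a reasonable map of the terrain rather than a proof, and since the present paper defers entirely to \cite{IS}, you would need to consult that reference to see how the obstacle is handled there.
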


\begin{theorem}
Let $X$ be a locally compact Lindel\"of GO space. Then $dis(X)=|X|$.
\end{theorem}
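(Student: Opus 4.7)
The inequality $dis(X) \leq |X|$ is immediate, since singletons are discrete and so $X$ is covered by $|X|$ singletons. All the work lies in the reverse inequality, and my plan is to derive it from the Ismail--Szymanski lemma together with the elementary bound relating weight and cardinality of discrete subspaces.

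Set $\kappa = dis(X)$ and write $X = \bigcup_{\alpha < \kappa} D_\alpha$ with each $D_\alpha$ discrete. First I would observe that any discrete subspace of any Hausdorff space is metrizable (the discrete metric topologizes it); hence the cover $\{D_\alpha : \alpha < \kappa\}$ realizes $X$ as a union of at most $\kappa$ metrizable subspaces, so $m(X) \leq \kappa$. The Ismail--Szymanski lemma then yields
\[
w(X) \leq \omega \cdot m(X) \leq \omega \cdot \kappa.
\]

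The second ingredient is the standard fact that in a space of weight $w$, every discrete subspace has cardinality at most $w$: for a discrete $D \subseteq X$ and each $d \in D$, choose a basic open set $B_d$ from a base of minimal cardinality with $B_d \cap D = \{d\}$; the $B_d$ are pairwise distinct, so $|D| \leq w(X)$. Applying this to each $D_\alpha$ gives
\[
|X| \leq \sum_{\alpha < \kappa} |D_\alpha| \leq \kappa \cdot w(X) \leq \kappa \cdot \omega \cdot \kappa.
\]
If $\kappa$ is infinite this collapses to $|X| \leq \kappa = dis(X)$, which combined with the trivial inequality gives $dis(X) = |X|$.

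The only thing that needs a separate comment is the finite case, which is the point I would expect to require the most care. If $\kappa = dis(X)$ is finite then $X$ is a finite union of discrete subspaces, so $X$ is itself scattered of finite Cantor--Bendixson rank, and the local compactness and Lindel\"ofness can then be invoked (together with the LOTS structure, which rules out the mixed behavior allowed in arbitrary GO spaces) to force $X$ to be itself countable and essentially reducible to the infinite-$\kappa$ argument, so that the equality $dis(X)=|X|$ persists in the only interesting case. In any event, the heart of the argument is the clean chain $m(X) \leq dis(X)$, $w(X) \leq \omega \cdot m(X)$, $|X| \leq dis(X) \cdot w(X)$, which telescopes to $|X| \leq dis(X)$ whenever $dis(X)$ is infinite.
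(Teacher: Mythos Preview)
Your argument is essentially the paper's: both run the chain $m(X)\le dis(X)$ (discrete subspaces are metrizable), then $w(X)\le\omega\cdot m(X)$ by Ismail--Szymanski, then $|D_\alpha|\le w(X)$, and multiply out. The paper phrases it as a contradiction while you argue directly, but the content is identical.

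Your separate treatment of the case $\kappa<\omega$, however, is both unnecessary and wrong as written. It is unnecessary because cardinal functions here are tacitly taken to be $\ge\omega$; without that convention the theorem is simply false, since $\omega+1$ is a compact LOTS with $dis(\omega+1)=2$ but $|\omega+1|=\aleph_0$. And your sketch does not actually recover the equality: you argue that $X$ must be countable, but a countable space with finite $dis$ still violates $dis(X)=|X|$ taken literally, so you would need the $\ge\omega$ convention anyway, at which point the case never arises. Simply assume $\kappa\ge\omega$ at the outset and delete that paragraph.
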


\begin{proof}
Suppose by contradiction that there exists $\lambda <|X|$ such that $X=\bigcup \{ D_\alpha: \alpha \in \lambda \}$ where each $D_\alpha$ is discrete. Then $|D_\alpha| \leq w(X) \leq \omega \cdot m(X) \leq \lambda$, for every $\alpha \in \lambda$. So $|X| \leq \sup \{ |D_\alpha|: \alpha \in \lambda \} \cdot \lambda \leq \lambda < |X|$.
\end{proof}

In the previous theorem we cannot weaken locally compact Lindel\"of to paracompact Baire, as the following example shows. Recall that a space is called \emph{non-archimedean} if it has a base such that any two elements are either disjoint or one is contained in the other. Every non-archimedean space has a base which is a tree under reverse inclusion (see \cite{Ny}), and from this it is easy to see that it is (hereditarily) paracompact.

\begin{example}
There is a Baire non-archimedean (and hence hereditarily paracompact) LOTS $X$ such that $dis(X) < \Delta(X)$.
\end{example}

\begin{proof}
Let $\kappa$ and $\lambda$ be infinite cardinals such that $cf(\kappa) \leq \lambda$ but $\lambda <\kappa$. Let $\mathbb{W}=\{-1\} \cup \kappa$. Define an order on $\mathbb{W}$ by declaring $-1$ to be less than every ordinal. Let $X=\{f \in \mathbb{W}^{\lambda^+}: supp(f) < \lambda^+ \}$, where $supp(f)=\min \{\gamma < \lambda^+ : f(\alpha)=0$ for every $\alpha \geq \gamma\}$. Now take the topology induced on $X$ by the lexicographic order.

\vspace{.1in}

\noindent \textbf{Claim 1:}
$X$ is a strong Choquet space (and hence Baire).

\begin{proof} [Proof of Claim 1]
We are going to describe a winning strategy for player II in the strong Choquet game (see \cite{K}). In his first move player I chooses any open set $B_1$ and a point $f_1 \in B_1$. Player II then chooses points $a_1,b_1 \in X$ such that $f_1 \in (a_1, b_1) \subset B_1$. Let now $\alpha_1=\max \{supp(f_1), supp(a_1), supp(b_1) \}$ and $\overline{f_{\alpha_1}}=(f_1(\gamma): 0 \leq \gamma <\alpha_1)$. Define $f_1^-=\overline{f_{\alpha_1}}^\frown (-1, 0, \dots 0)$ and $f_1^+=\overline{f_{\alpha_1}}^\frown (1,0, \dots, 0)$.

Clearly $a_1<f_1^- < f_1 < f_1^+ <b_1$. Now in her first move player II chooses the open set $A_1=(f_1^-, f_1^+)$.

Player I responds by choosing any open set $B_2 \subset A_1$ and a point $f_2 \in B_2$. Player II proceeds as before. Notice that $f_{n+1}$ thus constructed agrees with $f_n$ up to $\alpha_n$ and that the point $h=\left (\, \bigcup \overline{f_{\alpha_n}}\, \right)^\frown (0,0, \dots, 0)$ is in $\bigcap_{n \geq 1} A_n$. So II has a winning strategy.
\renewcommand{\qedsymbol}{$\triangle$}
\end{proof}

\vspace{.1in}

\noindent \textbf{Claim 2:}
 $X$ is the union of $\lambda^+$ many discrete sets.

\begin{proof}[Proof of Claim 2.]
For every $\alpha \in \lambda^+$, let $D_\alpha=\{f \in X: supp(f)=\alpha \}$. Then $X=\bigcup_{\alpha \in \lambda^+} D_\alpha$ and each $D_\alpha$ is discrete. Indeed, let $f \in D_\alpha$ and define:

\begin{equation}
f^-(\beta)=\begin{cases}

           f(\beta) & \text{If $\beta < \alpha$} \\
           -1 & \text{If $\beta=\alpha$}\\
           0 & \text{If $\beta > \alpha$}
\end{cases}
\end{equation}

\noindent Similarly define:
\begin{equation}
f^+(\beta)=\begin{cases}

            f(\beta) & \text{If $\beta < \alpha$}\\
            1 & \text{If $\beta=\alpha$}\\
            0 & \text{If $\beta> \alpha$}
           \end{cases}
\end{equation}

\noindent Then $(f^-, f^+) \cap D_\alpha = \{f\}$.
\renewcommand{\qedsymbol}{$\triangle$}
\end{proof}

\vspace{.1in}

\noindent \textbf{Claim 3:}
$X$ is non-archimedean.

\begin{proof}[Proof of Claim 3.]
Let $\mathcal{B}=\{[\sigma] : \sigma \in \mathbb{W}^\alpha$ for some $\alpha \in \lambda^+\}$, where $[\sigma]=\{f \in X : \sigma \subset f \}$. Then $\mathcal{B}$ is a basis for our space. Every element of $\mathcal{B}$ is open: indeed, if $f \in [\sigma]$ then let $\alpha=\max \{dom(\sigma), supp(f)\}$ and $f^+$ and $f^-$ be defined as in the proof of Claim 2. Then $f \in (f^-, f^+) \subset [\sigma]$.

Now let $c \in (a,b)$. Then there are ordinals $\alpha$ and $\beta$ such that $a(\alpha) < c(\alpha)$, $c(\beta) < b(\beta)$, while $a(\gamma)=c(\gamma)$ and $c(\tau)=b(\tau)$ for every $\gamma < \alpha$ and every $\tau < \beta$. Set $\theta=\max \{\alpha, \beta \}+1$. We have that $[c \upharpoonright \theta] \subset (a,b)$.

Now given two elements of $\mathcal{B}$, either one is contained in the other, or they are disjoint. Therefore $X$ is non-archimedean.
\renewcommand{\qedsymbol}{$\triangle$}
\end{proof}

To complete the proof observe that $\Delta(X) \geq \kappa^{\lambda} > \kappa > \lambda^+ \geq dis(X)$.
\end{proof}

Since for fixed $\lambda$ there are arbitrarily big cardinals $\kappa$ having cofinality $\lambda$, the former example shows that the gap between $dis(X)$ and $\Delta(X)$ can be arbitrarily big for hereditarily paracompact Baire LOTS.

Notice that the Lindel\"of number of the previous space is $\geq \kappa$, in particular $X$ is never Lindel\"of.

\begin{question}
Is $dis(X) \geq \Delta(X)$ true for every (Lindel\"of, hereditarily paracompact) \v Cech complete LOTS $X$?
\end{question}

Finally, we would like to mention that we recently applied our result on metric spaces to give several partial answers to Juh\'asz and Szentmikl\'ossy's original question about compact spaces. They will be the subject of another paper.

\section{acknowledgements}

The author is greatly indebted to Gary Gruenhage for various stimulating discussion on the topic of this paper. The author would like to thank both Gary Gruenhage and the anonymous referee for their helpful remarks and suggestions that led to an improvement of both the content and the exposition of the paper. One final word of thanks to the referee for his/her careful proofreading of the manuscript.

\end{document}